\allowdisplaybreaks \numberwithin{equation}{section}
\numberwithin{equation}{section}
\newtheorem{theorem}{Theorem}[section]
\newtheorem{proposition}[theorem]{Proposition}
\theoremstyle{definition}
\theoremstyle{remark}
\newtheorem{remark}[theorem]{Remark}
\newtheorem{conj}[theorem]{Conjecture}
\begin{document}
\title[A Liouville theorem for the Euler equations in a disk]{A Liouville theorem for the Euler equations in a disk}
	
	\author{Yuchen Wang, Weicheng Zhan}
	
\address{School of Mathematical Science
Tianjin Normal University, Tianjin, 300074,
P.R. China}
\email{wangyuchen@mail.nankai.edu.cn}

\address{School of Mathematical Sciences, Xiamen University, Xiamen, Fujian, 361005, P.R. China}
\email{zhanweicheng@amss.ac.cn}

%\thanks{This work was supported by NNSF of China Grant 11831009 and Chinese Academy of Sciences (No. QYZDJ-SSW-SYS021).}

	\begin{abstract}
We present a symmetry result regarding stationary solutions of the 2D Euler equations in a disk. We prove that in a disk, a steady flow with only one stagnation point and tangential boundary conditions is a circular flow, which confirms a conjecture proposed by F. Hamel and N. Nadirashvili in [J. Eur. Math. Soc., 25 (2023), no. 1, 323-368]. The key ingredient of the proof is to use `local' symmetry properties for the non-negative solutions of semi-linear elliptic equations with a continuous nonlinearity in a ball, which can be established by a rearrangement technique called continuous Steiner symmetrization.
	\end{abstract}
	
	\maketitle{\small{\bf Keywords:} The Euler equation, Circular flows, Liouville theorem, Semilinear elliptic equation. \\

%\tableofcontents
	
	\section{Introduction and Main result}
We are concerned in this note with stationary solutions of the 2D Euler equations in a planar domain $\Omega\subset \mathbb{R}^2$
	\begin{align}\label{1-1}
		\begin{cases}
			\mathbf{v}\cdot \nabla \mathbf{v} =-\nabla P&\text{in}\ \ \Omega,\\
			\nabla\cdot\mathbf{v}=0\,\ \, \ \ \ \ \ \ \  \ \, &\text{in}\ \ \Omega,
\end{cases}
	\end{align}
where $\mathbf{v}=(v_1, v_2)$ is the velocity field and $P$ is the scalar pressure. Here the solutions $\mathbf{v}$ and $P$ are always understood in the classical sense, that is, they are (at least) of class $C^1$ in $\Omega$, and therefore satisfy \eqref{1-1} everywhere in $\Omega$. When the boundary $\partial \Omega$ is not empty, we also need to assume some appropriate boundary conditions.

We are interested in investigating how the geometry of domain $\Omega$ affects the properties of steady flows, specifically identifying conditions (as weak as possible) that ensure that solutions inherit the geometric symmetry properties of the domain. For example, a famous result by Hamel and Nadirashvili \cite{HN3} shows that in a strip, a steady flow with no stationary point and tangential boundary conditions is a shear flow.
We refer the reader to \cite{ Chae, Cons, Gom, HN3, HN2, HN1, HN, Nad, Ruiz} for some relevant results in this aspect.

Let $D$ be an open non-empty disk with radius $R>0$ centered at the origin, and ${n}$ be the outward unit normal on $\partial D$. We say that a flow $\mathbf{v}$ is a circular flow if $\mathbf{v}(x)$ is parallel to the vector $\mathbf{e}_\theta(x)=\left(-x_2/|x|, x_1/|x|\right)$ at every point $x\in D\backslash\{0\}$. Recently, Hamel and Nadirashvili \cite{HN} proposed the following conjecture:
\begin{conj}[\cite{HN}, Conjecture 1.12]\label{con1}
  Let $z\in D$ and let $\mathbf{v}$ be a $C^2(\overline{D}\backslash \{z\})$ and bounded flow solving \eqref{1-1} with $\Omega=D\backslash\{z\}$ and $\mathbf{v}\cdot n=0$ on $\partial D$. Assume that $|\mathbf{v}|>0$ in $\overline{D}\backslash\{z\}$. Then $z$ is the origin and $\mathbf{v}$ is a circular flow.
\end{conj}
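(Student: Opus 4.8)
The plan is to pass to the stream function and reduce the conjecture to a radial symmetry statement for a semilinear elliptic equation with a merely continuous nonlinearity. Since $\mathbf v$ is divergence-free and bounded near $z$, the circulation $\oint_{\gamma_{\ep}}\mathbf v\cdot\mathrm d\ell$ along a small circle $\gamma_{\ep}$ around $z$ tends to $0$ as $\ep\to0$, so $\mathbf v$ admits a single-valued stream function $u$ on $D\setminus\{z\}$ with $\mathbf v=\nabla^\perp u$; being Lipschitz, $u$ extends continuously to all of $\overline D$, is of class $C^3$ on $\overline D\setminus\{z\}$, and — using $\mathbf v\cdot n=0$ and that $\partial D$ is connected — is constant on $\partial D$, which we normalize to $u=0$. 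The hypothesis $|\mathbf v|>0$ on $\overline D\setminus\{z\}$ says precisely that $\nabla u$ vanishes nowhere on $\overline D\setminus\{z\}$. In particular $u$ cannot change sign in $D$ (an interior positive maximum together with an interior negative minimum would produce two critical points), so after possibly replacing $u$ by $-u$ we may assume $u>0$ in $D\setminus\{z\}$; then $u$ attains its maximum in $D$, necessarily at the unique critical point $z$, so $u(z)=\max_{\overline D}u=:M>0$.

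Next I would derive the semilinear equation. With the Bernoulli function $H:=P+\tfrac12|\mathbf v|^2$, the steady Euler system yields $\nabla H=(\Delta u)\,\nabla u$ in $D\setminus\{z\}$, so $H$ is constant along each connected level set of $u$. Since $\nabla u\neq 0$ away from $z$, the level sets $\{u=c\}$ with $0<c<M$ are smooth Jordan curves nested around $z$ (and $\{u=c\}$ is connected for every such $c$), which forces $H=\mathcal H(u)$ for some $\mathcal H\in C^1$, hence $\Delta u=f(u)$ in $D\setminus\{z\}$ with $f:=\mathcal H'$. One then checks that $f$ extends continuously to $[0,M]$: near $z$ this is immediate, and near $\partial D$ one uses that $\Delta u$ is constant on each level curve $\{u=c\}$ together with the $C^3$ regularity of $u$ up to $\partial D$ to see that $\Delta u$ (hence $f$) has a limit as $c\to 0^+$. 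Since $\mathbf v$ is bounded and $\Delta u=f(u)\in L^\infty$ near $z$, the isolated singularity at $z$ is removable: $u\in W^{2,p}_{\mathrm{loc}}(D)\cap C^{1,\alpha}(\overline D)$ for all $p<\infty$, and $-\Delta u=g(u)$ holds in the whole ball $D=B_R$, with $g:=-f\in C^0([0,M])$, $u>0$ in $B_R$, $u=0$ on $\partial B_R$, and the critical set of $u$ in $B_R$ equal to the single point $\{z\}$.

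The heart of the argument is then a \emph{local symmetry} property for nonnegative solutions of $-\Delta u=g(u)$ in a ball with $g$ only continuous, obtained by applying the continuous Steiner symmetrization in every direction through the center: any such solution is locally symmetric about the center, in the sense that $B_R$ minus the critical set $\{|\nabla u|=0\}$ is a disjoint union of open annuli centered at the origin, on each of which $u$ is radially symmetric and strictly radially monotone, with the separating spheres contained in $\{|\nabla u|=0\}$. I would state and prove this as the main lemma; the point is that the classical moving-plane method is not available here because $g$ need not be Lipschitz near the boundary value $0$, which is exactly where the symmetrization technique is needed. Granting the lemma, since here $\{|\nabla u|=0\}=\{z\}$ is a single point there can be no separating spheres, so the connected set $B_R\setminus\{z\}$ must itself be a single such annulus centered at $0$; this is possible only if $z=0$, and then $u$ is radially symmetric in $B_R\setminus\{0\}$, hence in $B_R$. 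Writing $u(x)=\phi(|x|)$, we get $\mathbf v=\nabla^\perp u=\phi'(|x|)\,\mathbf e_\theta(x)$, which is parallel to $\mathbf e_\theta(x)$ at every $x\in D\setminus\{0\}$ (and $\phi'$ has no further zeros, by $|\mathbf v|>0$ off $z$); thus $z=0$ and $\mathbf v$ is a circular flow.

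I expect the principal obstacle to be proving the local symmetry lemma itself — making the continuous Steiner symmetrization argument rigorous for a merely continuous nonlinearity and extracting the precise concentric-annular structure of the solution outside its critical set. A secondary, more technical difficulty is the regularity bookkeeping in the reduction: the single-valuedness and continuous extension of $u$, the continuity of $f$ up to $\partial B_R$, and the removability of the singularity at $z$, which together license applying the semilinear symmetry result on the full ball rather than on the punctured one.
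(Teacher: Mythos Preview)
The paper does \emph{not} prove this statement: Conjecture~\ref{con1} is presented as open, and only the weaker Conjecture~\ref{con2} (in the form of Theorem~\ref{th}, where $\mathbf v\in C^2(\overline D)$) is established. Your sketch is, in outline, exactly the paper's proof of Theorem~\ref{th} --- reduce to a stream function solving $-\Delta u=g(u)$ with $g$ continuous, then invoke Brock's local symmetry result (the paper's Proposition~\ref{p-2}) and use that the critical set is a single point --- together with additional steps meant to absorb the possible singularity at $z$. Those additional steps are where the argument fails.

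The assertions that ``$f$ extends continuously to $[0,M]$: near $z$ this is immediate'' and that ``$\Delta u=f(u)\in L^\infty$ near $z$'' are not justified by the hypotheses of Conjecture~\ref{con1}: one is told only that $\mathbf v$ is bounded, not that its derivatives --- hence the vorticity $\Delta u=\partial_1 v_2-\partial_2 v_1$ --- are. A concrete admissible example is $\mathbf v=\mathbf e_\theta$ on $D\setminus\{0\}$ with pressure $P=\log|x|$: it is $C^\infty(\overline D\setminus\{0\})$, bounded, satisfies $\mathbf v\cdot n=0$ on $\partial D$ and $|\mathbf v|\equiv 1>0$, yet (after the sign normalization) the stream function is $u=R-|x|$, so $\Delta u=-1/|x|$, $f(\tau)=1/(R-\tau)$ blows up as $\tau\to M^-$, and $u\notin C^1$ at the origin. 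Thus the removable-singularity step does not go through, and Brock's theorem --- which in the form quoted requires $u\in C^1(\overline{B_R})$ and a continuous nonlinearity on the whole range --- is not applicable on the full ball. This is precisely the obstruction that the paper (following \cite{HN}) singles out as the reason Conjecture~\ref{con1} remains open; your scheme does prove Theorem~\ref{th}, but not Conjecture~\ref{con1}. (A small side remark: to get single-valuedness of $u$ on the punctured disk you need the \emph{flux} $\oint_{\gamma_\ep}\mathbf v\cdot n\,ds$ to vanish, not the circulation; the divergence-free condition plus boundedness indeed forces this, so that part is fine once reworded.)
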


 We would like to point out that although this conjecture is quite natural, it seems hard to give a rigorous proof; see pages 332-333 in \cite{HN} for some discussions of  main difficulties. A related weaker conjecture is more promising, which is also stated in \cite{HN}.
 \begin{conj}[\cite{HN}, a weaker version of Conjecture \ref{con1}]\label{con2}
    Let $\mathbf{v}\in C^2(\overline{D})$ solve \eqref{1-1} with $\Omega=D$ and $\mathbf{v}\cdot n=0$ on $\partial D$. Assume that $z\in D$ is the only stagnation point of $\mathbf{v}$ in $\overline{D}$, that is, $|\mathbf{v}(z)|=0$ and $|\mathbf{v}|>0$ in $\overline{D}\backslash\{z\}$. Then $z$ is the origin and $\mathbf{v}$ is a circular flow.
 \end{conj}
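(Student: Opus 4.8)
The plan is to pass to the stream-function formulation and thereby reduce the statement (Conjecture~\ref{con2}) to a symmetry result for a semilinear elliptic equation with a merely continuous nonlinearity. Since $\nabla\cdot\mathbf v=0$ in the simply connected domain $D$, write $\mathbf v=\nabla^\perp\psi=(-\partial_{x_2}\psi,\partial_{x_1}\psi)$ with $\psi\in C^3(\overline D)$; then $\nabla\psi=(v_2,-v_1)$, so $|\nabla\psi|=|\mathbf v|$ and $\nabla\psi$ vanishes \emph{only} at $z$ (on $\partial D$ one has $\psi\equiv\mathrm{const}$, hence zero tangential derivative, while the normal derivative equals $\pm|\mathbf v|\neq 0$). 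Because $\psi$ is nonconstant and $z$ is its unique interior critical point, one of $\max_{\overline D}\psi,\ \min_{\overline D}\psi$ is attained at $z$; replacing $\psi$ by $-\psi$ if needed and subtracting the boundary constant, we may assume $\psi=0$ on $\partial D$, $\psi>0$ in $D$, and $\psi(z)=\max_{\overline D}\psi=:M>0$. Taking the curl of the momentum equation gives $\mathbf v\cdot\nabla\omega=0$ with $\omega:=\Delta\psi$, i.e. $\nabla\psi\parallel\nabla\omega$ throughout $D\setminus\{z\}$, so $\omega$ is constant along each connected component of each level set of $\psi$.

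Next I would nail down the level-set structure, which is where the single-stagnation-point hypothesis does its work. Since $z$ is the only interior critical point and it is a maximum, $\psi$ has no saddles, and a routine argument (a superlevel set $\{\psi>c\}$ or one of its complementary holes that does not contain $z$ would carry a second interior extremum of $\psi$) shows that for every $c\in(0,M)$ the set $\{\psi>c\}$ is a topological disk and $\{\psi=c\}$ is a single Jordan curve. Consequently $\omega$ depends only on the value of $\psi$: there is $f\colon[0,M]\to\mathbb R$ with $\omega=f(\psi)$ in $\overline D$, and $f$ is continuous (if $c_n\to c$, choose $x_n$ with $\psi(x_n)=c_n$; any subsequential limit $x$ satisfies $\psi(x)=c$, whence $f(c_n)=\omega(x_n)\to\omega(x)=f(c)$). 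Extending $f$ continuously, $\psi$ is a nonnegative $C^1$ solution of $-\Delta\psi=g(\psi)$ in $D$ with $\psi=0$ on $\partial D$, where $g:=-f$ is continuous but in general not Lipschitz — which is exactly why the classical Gidas--Ni--Nirenberg moving-plane method is not available here.

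The heart of the matter is then the ``local symmetry'' property highlighted in the abstract: every nonnegative $C^1$ solution $u$ of $-\Delta u=g(u)$ in a ball, with $u=0$ on the boundary and $g$ only continuous, is \emph{locally symmetric} in the sense that each connected component $N$ of $\{x:\nabla u(x)\neq0\}$ is an annulus $\{a_N<|x-y_N|<b_N\}$ (allowing $a_N=0$) on which $u$ is a strictly monotone radial function of $|x-y_N|$. Establishing this in the required generality — via the continuous Steiner symmetrization machinery, controlling the behaviour of the Dirichlet energy and the nonlinear term along the symmetrization flow — is the main obstacle; everything around it is comparatively soft. Granting it, apply the result to $u=\psi$: here $\{x\in D:\nabla\psi(x)\neq0\}=D\setminus\{z\}$ is \emph{connected} (a punctured disk), hence is the unique component $N$, so $D\setminus\{z\}=\{a<|x-y|<b\}$. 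A punctured disk $B_R(0)\setminus\{z\}$ is invariant under rotations about a point only when that point is the origin, which forces $y=0$, and then $z=0$, $a=0$, $b=R$; therefore $\psi=U(|x|)$ is radial on $D$. Finally $\mathbf v=\nabla^\perp\psi=U'(|x|)\,\mathbf e_\theta$ on $D\setminus\{0\}$, so $z$ is the origin and $\mathbf v$ is a circular flow, which is precisely Conjecture~\ref{con2}.
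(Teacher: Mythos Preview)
Your proposal is correct and follows essentially the same route as the paper: pass to the stream function, show it satisfies a semilinear elliptic equation $-\Delta\psi=g(\psi)$ with merely continuous $g$, invoke Brock's local-symmetry theorem (proved via continuous Steiner symmetrization), and then use the uniqueness of the critical point to force a single annulus and hence full radial symmetry about the origin. The only cosmetic difference is that you obtain $f$ by a level-set connectedness argument, whereas the paper parametrizes by a gradient-flow curve (its Proposition~\ref{p-1}, following \cite{HN}); the core strategy and the key external input (Proposition~\ref{p-2}) are identical.
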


The purpose of this note is to give a positive answer to Conjecture \ref{con2}. Indeed, we prove a slightly stronger result which only requires the flow to have a single stagnation point within the domain.
\begin{theorem}\label{th}
   Let $\mathbf{v}\in C^2(\overline{D})$ solve \eqref{1-1} with $\Omega=D$ and $\mathbf{v}\cdot n=0$ on $\partial D$. Assume that $z\in D$ is the only stagnation point of $\mathbf{v}$ in $D$, that is, $|\mathbf{v}(z)|=0$ and $|\mathbf{v}|>0$ in $D\backslash\{z\}$. Then $z$ is the origin and $\mathbf{v}$ is a circular flow.
\end{theorem}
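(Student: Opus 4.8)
The plan is to translate Theorem \ref{th} into a symmetry statement for a semilinear elliptic equation in the disk and then apply a \emph{local symmetry} principle for nonnegative solutions with a merely continuous nonlinearity. First, since $\mathbf{v}$ is divergence-free on the simply connected set $D$, write $\mathbf{v}=\nabla^\perp\psi=(-\partial_{x_2}\psi,\partial_{x_1}\psi)$ for some $\psi\in C^3(\overline{D})$; the condition $\mathbf{v}\cdot n=0$ on the connected curve $\partial D$ forces $\psi$ to be constant there, and after subtracting that constant we may take $\psi=0$ on $\partial D$. The stagnation points of $\mathbf{v}$ are exactly the critical points of $\psi$, so $z$ is the unique critical point of $\psi$ in $D$. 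Since every interior extremum of $\psi$ is a critical point, the maximum and the minimum of $\psi$ over $\overline{D}$ are each attained either at $z$ or on $\partial D$ (where $\psi=0$); they cannot both vanish unless $\psi\equiv0$, so, after replacing $\mathbf{v}$ by $-\mathbf{v}$ if necessary, we may assume $\psi>0$ in $D$, that $\max_{\overline{D}}\psi=\psi(z)$ is attained only at $z$, and that $\nabla\psi\neq0$ on $D\setminus\{z\}$.

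Next I would identify the equation solved by $\psi$. For every $c\in(0,\psi(z))$ the set $\{\psi=c\}$ is a compact, regular level set contained in $D\setminus\{z\}$; a standard topological argument — using that $\{\psi\ge c\}$ is connected (a second component would carry a second interior critical point) and has connected complement in the plane — shows $\{\psi=c\}$ is a single Jordan curve, and that these curves foliate $D\setminus\{z\}$. The steady Euler equations give $\nabla^\perp\psi\cdot\nabla\omega=0$ with $\omega=\Delta\psi$ (equivalently $\nabla(\tfrac12|\mathbf{v}|^2+P)=\omega\,\nabla\psi$), so $\omega$ is constant along each such curve; letting $f(c)$ be that constant defines $f$ on $(0,\psi(z)]$, which is continuous because the level curves vary continuously and $\omega$ is continuous (with $f(\psi(z))=\omega(z)$), and which extends continuously down to $c=0$ once the behaviour of $\omega$ near $\partial D$ is controlled. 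Hence $\psi$ is a nonnegative $C^3$ solution of
\[
\Delta\psi=f(\psi)\ \text{ in }D=B_R(0),\qquad \psi=0\ \text{ on }\partial D,
\]
with $f$ continuous and $\{\nabla\psi\neq0\}=D\setminus\{z\}$.

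I would then invoke the local symmetry property for nonnegative solutions of $\Delta u=f(u)$ in a ball with continuous $f$ — the statement that can be obtained by continuous Steiner symmetrization — which asserts that $\{\nabla\psi\neq0\}$ splits into a disjoint union of round annuli and punctured balls, on each of which $\psi$ depends only on the distance to that piece's centre and is strictly monotone in it. Since $\{\nabla\psi\neq0\}=D\setminus\{z\}$ is connected, it is a single such piece; matching it with $B_R(0)\setminus\{z\}$ forces this piece to be a punctured ball whose puncture is $z$ and whose closure is $\overline{D}$, hence $z=0$ and $\psi$ is radially symmetric about the origin. Finally $\psi=\psi(|x|)$ yields $\mathbf{v}=\nabla^\perp\psi=\psi'(|x|)\,\mathbf{e}_\theta$, i.e. $\mathbf{v}$ is a circular flow, which is exactly the conclusion of Theorem \ref{th}.

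Two points require genuine work. The first is obtaining $\omega=f(\psi)$ with $f$ \emph{continuous} on the full range $[0,\psi(z)]$: this rests on the Jordan-curve structure of the level sets of $\psi$ and on understanding $\omega$ near $\partial D$, where stagnation points are a priori allowed. The second, and the crux, is the local symmetry principle itself: since $f$ is only continuous and not Lipschitz, the classical moving-plane method is unavailable, and one must instead run the continuous Steiner symmetrization (or polarization) scheme to recover radial symmetry on each nodal piece.
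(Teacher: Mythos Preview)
Your proposal is correct and follows essentially the same route as the paper: reduce to a stream function $\psi$ that is positive in $D$, vanishes on $\partial D$, and has $z$ as its only interior critical point; show $\Delta\psi+f(\psi)=0$ for some continuous $f$ on the full range (the paper does this via gradient trajectories and streamlines as in \cite{HN}, which is the precise implementation of your level-set/Jordan-curve sketch, including the boundary continuity of $f$ via uniform continuity of $\Delta\psi$); then invoke Brock's local-symmetry theorem obtained by continuous Steiner symmetrization, and use the uniqueness of the critical point (equivalently, the connectedness of $\{\nabla\psi\neq0\}$) to force a single radial piece with center $0$. The two ``points requiring genuine work'' you flag are exactly the two auxiliary propositions the paper isolates.
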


The proof of Theorem \ref{th} is provided in Section 2.

\section{Proof of Theorem \ref{th}}
In this section, we give a proof to Theorem \ref{th}. We will follow the strategy in \cite{HN} and turn the problem into the symmetry of solutions of a semilinear elliptic boundary value problem in $D$.

The flow $\mathbf{v}$ has a stream function $u:\overline{D}\to \mathbb{R}$ of class $C^3(\overline{D})$ defined by
\begin{equation*}
  \nabla^\perp u=\mathbf{v},\ \ \ \text{that is}\ \ \ \partial_1 u=v_2\ \ \text{and}\ \ \partial_2u=-v_1
\end{equation*}
in $\overline{D}$, since $D$ is simply connected and $\mathbf{v}$ is divergence free. The tangency condition $\mathbf{v}\cdot {n}=0$ on $\partial D$ implies that $u$ is constant along $\partial D$. Such stream function $u$ is uniquely defined in $\overline{D}$ up to an additive constant. Up to normalization, we may assume, without loss of generality, that
\begin{equation*}
  u=0\ \ \ \text{on}\ \ \partial D.
\end{equation*}
Since $z\in D$ is the only stagnation point of $\mathbf{v}$ in $D$, the stream function $u$ has a unique critical point in $D$. We may assume, without loss of generality, that $u$ has a unique maximum point in $\overline{D}$ and this point is actually the stagnation point $z$ (after possibly changing $\mathbf{v}$ into $-\mathbf{v}$ and $u$ into $-u$).
 The uniqueness of the critical point of $u$ in $D$ implies that
\begin{equation*}
  0<u(x)<u(z)\ \ \ \text{for all}\ \ x\in D\backslash\{z\}.
\end{equation*}
By Proposition \ref{p-1} in Section 3, we see that there is a continuous function $f: [0, u(0)] \to \mathbb{R}$ such that
  \begin{equation*}
   \Delta u+f(u)=0\ \ \ \text{in}\ \ D.
  \end{equation*}
It remains to show that $u$ is a radially decreasing function with respect to the origin. Indeed, from Proposition \ref{p-2} in Section 3, one knows that $u$ is locally symmetric, namely, it is  radially symmetric and radially decreasing in some annuli (probably infinitely many) and flat elsewhere. Recall that $u$ has a unique critical point in $D$. We conclude that the number of annuli can only be one at most, and hence $u$ is a radially decreasing function. The proof is thus complete.

\section{Auxiliary results}
In this section, we collect two auxiliary results, which have been used in the proof.

The first result states that the corresponding stream function of a steady flow satisfies some semi-linear elliptic equation under certain extra assumptions.
\begin{proposition}\label{p-1}
  Let $\mathbf{v}$ be as in Theorem \ref{th} and let $u\in C^3(\overline{D})$ be the corresponding stream function, and let $J$ its range defined by
  \begin{equation*}
    J=\{u(x)\mid x\in \overline{D}\}.
  \end{equation*}
  Then there is a continuous function $f: J \to \mathbb{R}$ such that
  \begin{equation*}
   \Delta u+f(u)=0\ \ \ \text{in}\ \ \overline{D}.
  \end{equation*}
\end{proposition}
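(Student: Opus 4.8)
The plan is to show that $u$ takes each of its non-extremal values on a single level curve on which $\nabla u\neq 0$, so that $\Delta u$ is automatically a well-defined function of $u$ alone. Concretely, recall that $u\in C^3(\overline D)$, that $u=0$ on $\partial D$, that $z$ is the unique critical point of $u$ in $D$, and that $0<u(x)<u(z)$ for every $x\in D\setminus\{z\}$; hence $J=[0,u(z)]$. For $t\in (0,u(z))$ the level set $\{u=t\}$ contains no critical point of $u$, so by the implicit function theorem it is a $C^3$ embedded $1$-manifold, and since it is also closed in $D$ it consists of finitely many disjoint Jordan curves. A connectedness argument then pins it down to exactly one: if there were two such curves $\gamma_1,\gamma_2$, one would bound a region inside the other (or they would be nested/disjoint in a way forcing an extra interior extremum of $u$), and the extreme-value theorem on the enclosed compact region would produce a critical point of $u$ in $D$ distinct from $z$, a contradiction. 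Therefore each $t\in(0,u(z))$ is a regular value with connected level curve $\gamma_t$.

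Next I would \emph{define} $f$ on $(0,u(z))$ by $f(t):=-\Delta u(x)$ for any $x\in\gamma_t$, and check this is well defined and continuous. Well-definedness: along a regular connected level curve, $\nabla u$ is nowhere zero, so by the classical Euler vorticity identity for stationary flows — the vorticity $\omega=\Delta u$ (here $\omega = \partial_1 v_2 - \partial_2 v_1 = \Delta u$) is transported by $\mathbf v=\nabla^\perp u$, i.e. $\nabla^\perp u\cdot\nabla(\Delta u)=0$ in $D$ — the function $\Delta u$ is constant on each connected component of each level set of $u$; since $\gamma_t$ is connected, $\Delta u$ is genuinely constant on it, so $f(t)$ does not depend on the chosen point $x$. (This transport identity is just the scalar form of $\mathbf v\cdot\nabla\omega=0$, which follows by taking the curl of \eqref{1-1}; it holds classically because $\mathbf v\in C^2$.) Continuity of $f$ on $(0,u(z))$: given $t_n\to t$ in $(0,u(z))$, pick $x\in\gamma_t$; since $\nabla u(x)\neq 0$ the map $s\mapsto$ (a point of $\gamma_s$ near $x$) can be chosen continuously in $s$ by the implicit function theorem, and then $f(t_n)=-\Delta u$(that point)$\to -\Delta u(x)=f(t)$ by continuity of $\Delta u\in C^1(\overline D)$.

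It remains to extend $f$ continuously to the endpoints $0$ and $u(z)$ and to verify the PDE everywhere in $\overline D$. At the maximum point, set $f(u(z)):=-\Delta u(z)$; as $t\uparrow u(z)$ the curves $\gamma_t$ shrink to $z$ (again because $z$ is the only critical point, no level curve can stay away from $z$), so any $x_t\in\gamma_t$ satisfies $x_t\to z$ and hence $f(t)=-\Delta u(x_t)\to-\Delta u(z)$ by continuity of $\Delta u$; thus $f$ is continuous at $u(z)$. At $t=0$, the level set in $\overline D$ is $\{u=0\}$, which contains $\partial D$ and possibly nothing else in the interior (since $u>0$ in $D\setminus\{z\}$); define $f(0):=-\Delta u(p)$ for $p\in\partial D$, noting that $\partial D$ is a connected regular level curve on which $\nabla u\neq 0$ — indeed $|\mathbf v|=|\nabla u|>0$ on $\partial D\subset D\setminus\{z\}$ by hypothesis — so $\Delta u$ is constant on $\partial D$ by the same transport argument, and continuity at $0$ follows as above by shrinking $\gamma_t$ onto $\partial D$ as $t\downarrow 0$. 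With $f$ so defined on all of $J$, the identity $\Delta u(x)+f(u(x))=0$ holds by construction for every $x$ with $u(x)\in(0,u(z))$, i.e. every $x\in\overline D\setminus(\{z\}\cup\partial D)$; at $x=z$ it holds because $f(u(z))=-\Delta u(z)$, and at $x\in\partial D$ because $f(0)=-\Delta u(x)$ (constant on $\partial D$). Hence $\Delta u+f(u)=0$ in $\overline D$, as claimed.

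The main obstacle is the topological step establishing that every regular level set $\{u=t\}$, $t\in(0,u(z))$, is a single Jordan curve; this is where the hypothesis ``$z$ is the \emph{only} stagnation point'' is essential, and one must argue carefully — e.g. via the structure of level sets of Morse-type functions or a direct Jordan-curve-theorem argument counting enclosed extrema — that no extra components can appear. Everything else (the transport identity $\mathbf v\cdot\nabla\Delta u=0$, continuity of $f$, behavior at the endpoints) is then routine given $u\in C^3(\overline D)$ and $|\nabla u|>0$ on $\overline D\setminus\{z\}$.
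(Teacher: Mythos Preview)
Your approach and the paper's are the same at heart, just organized differently. The paper fixes one gradient trajectory $\sigma_y$ of $u$ (along which $u$ sweeps through all of $(0,u(z))$), defines $f$ along it, and then invokes Lemmas~2.2 and~2.6 of \cite{HN} to know that every streamline is a periodic Jordan curve surrounding $z$ that meets $\sigma_y$; since $u$ and $\Delta u$ are constant on streamlines, this transfers the relation $\Delta u+f(u)=0$ from $\sigma_y$ to all of $D$. You instead argue directly that each level set $\{u=t\}$ is a single Jordan curve and define $f$ level by level. Since streamlines are exactly the connected level sets of $u$, the underlying topological content is identical; your extreme-value sketch for ruling out multiple components is correct and is essentially what \cite{HN} proves. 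The advantage of the paper's route is that continuity of $f$ is immediate (it is a composition of $C^1$ maps along a single curve), whereas your route needs the implicit-function-theorem step; the advantage of yours is that it is self-contained and does not rely on the cited lemmas.

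There is one genuine slip at the boundary. You write that $|\nabla u|>0$ on $\partial D$ because ``$\partial D\subset D\setminus\{z\}$'', but $D$ is the \emph{open} disk, and Theorem~\ref{th} only assumes $|\mathbf v|>0$ in $D\setminus\{z\}$, not in $\overline D\setminus\{z\}$; stagnation points on $\partial D$ are allowed. Hence the transport identity does not directly force $\Delta u$ to be constant on $\partial D$. The repair is precisely the limiting argument you already gesture at (and which the paper uses): since $\Delta u$ is constant on each $\gamma_t$ and, for every $p\in\partial D$, one can choose $x_t\in\gamma_t$ with $x_t\to p$ as $t\downarrow 0$ (e.g.\ along the radius through $p$, by the intermediate value theorem), uniform continuity of $\Delta u$ on $\overline D$ gives that $\lim_{t\to 0^+}f(t)$ exists and equals $-\Delta u(p)$ for every $p\in\partial D$. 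This simultaneously shows $\Delta u$ is constant on $\partial D$ and that $f$ extends continuously to $0$. With this correction your argument is complete.
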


\begin{proof}
The result can be proved by using a similar argument as in \cite{HN}. For the reader's convenience, we present the detailed proof here. Without loss of generality, we may assume that
\begin{equation*}
  u=0\ \,\text{on}\ \,\partial D\ \ \ \text{and}\ \ \ 0<u<u(z)\ \,\text{in}\ \,D\backslash\{z\}.
\end{equation*}
Consider any point $y\in D\backslash\{z\}$. Let $\sigma_y$ be the solution of
	\begin{align}\label{3-10}
		\begin{cases}
			 \dot{\sigma}_y(t) = \nabla u(\sigma_y(t)), &\\
             \sigma_y(0)=y.\ \ &
\end{cases}
	\end{align}
Then by Lemma 2.2 in \cite{HN}, there are some quantities $t_y^{\pm}$ such that $-\infty\le t^-_y<0<t^+_y\le +\infty$ and the solution $\sigma_y$ of \eqref{3-10} is of class $C^1((t_y^-, t^+_y))$ and ranges in $D\backslash \{z\}$, with
	\begin{align}\label{3-11}
		\begin{cases}
		|\sigma_y(t)|\to R\ \,\text{and}\ \,u(\sigma_y(t))\to 0\ \,\text{as}\ \,t\to t_y^-,\ \ &\\
|\sigma_y(t)-z|\to 0 \ \,\text{and}\ \,u(\sigma_y(t))\to u(z)\ \,\text{as}\ \, t\to t_y^+. &
\end{cases}
	\end{align}
Set $g:=u\circ \sigma_y\in C^1((t_y^-, t_y^+))$. Then $g$ is increasing since $(u\circ \sigma_y)'(t)=|\nabla u(\sigma_y(t))|^2=|\mathbf{v}(\sigma_y(t))|^2>0$ for all $t\in (t_y^-, t_y^+)$. So $g$ is an increasing homeomorphism from $(t_y^-, t_y^+)$ onto $(0, u(z))$. Consider the function $f: (0, u(z))\to \mathbb{R}$ defined by
\begin{equation}\label{3-12}
  f(\tau)=-\Delta u(\sigma_y(g^{-1}(\tau)))\ \ \ \text{for}\ \ \tau\in (0, u(z)).
\end{equation}
Then $f$ is of class $C^1((0, u(z)))$ by the chain rule. The equation $\Delta u+f(u)=0$ is now satisfied along the curve $\sigma_y((t_y^-, t_y^+))$. Let us check it in the whole set $D$. Consider first any point $x\in D$. Let $\xi_x$ be the solution of
	\begin{align*}
		\begin{cases}
			 \dot{\xi}_x(t) = \mathbf{v}(\xi_x(t)), &\\
             \xi_x(0)=x.\ \ &
\end{cases}
	\end{align*}
Then $\xi_x$ is defined in $\mathbb{R}$ and periodic. Furthermore, the streamline $\Phi_x:=\xi_x(\mathbb{R})$ is a $C^1$ Jordan curve surrounding $z$ in $D$ and meets the curve $\sigma_y((t_y^-, t_y^+))$ once; see Lemma 2.6 in \cite{HN}. Hence, there is $s\in (t_y^-, t_y^+)$ such that $\sigma_y(s)\in \Phi_x$. Note that both the stream function $u$ and the vorticity $\Delta u$ are constant along the streamline $\Phi_x$. It follows from \eqref{3-12} that
\begin{equation*}
  \Delta u(x)+f(u(x))=\Delta u(\sigma_y(s))+f(u(\sigma_y(s)))=\Delta u(\sigma_y(s))+f(g(s))=0.
\end{equation*}
Therefore, $\Delta u+f(u)=0$ in $D$. By the continuity of $u$ in $\overline{D}$, we have
\begin{equation*}
 \min_{t\in \mathbb{R}}|\xi_x(t)|\to R  \ \text{as}\ |x|\to R\ \ \ \text{and}\ \ \   \max_{t\in \mathbb{R}}|\xi_x(t)-z|\to 0\ \text{as}\ |x-z|\to 0.
\end{equation*}
Since $\Delta u$ is uniformaly continuous in $\overline{D}$ and constant along any streamline of the flow, we see that $\Delta u$ is constant on $\partial D$. Call $d$ the value of $\Delta u$ on $\partial D$. Set $f(0)=-d$ and $f(u(z))=-\Delta u (z)$. Then we infers from \eqref{3-11} and \eqref{3-12} that $f:[0, u(z)]\to \mathbb{R}$ is continuous in $[0, u(z)]$ and that the equation $\Delta u+f(u)=0$ holds in $\overline{D}$. The proof is thus complete.
\end{proof}

The following result is about the symmetry of solutions to semi-linear elliptic equations with a continuous nonlinearity in a ball, which can be found in \cite{Bro1}(see also \cite{Bro2}). Such symmetry results are obtained by a rearrangement technique called continuous Steiner symmetrization; see \cite{Bro0,Bro1}.

\begin{proposition}[\cite{Bro1}, Theorem 7.2]\label{p-2}
  Let $B_R$ be a ball in $\mathbb{R}^N$, with radius $R>0$ centered at the origin, and let $g=g(r,t)$ be of class $ C([0, R]\times [0, +\infty))$ and be non-increasing in $r$. Let $u\in C^1(\overline{B_R})$ be a weak solution of the following problem
	\begin{align*}
		\begin{cases}
			-\Delta u=f(|x|, u),\ \ u>0&\text{in}\ \ B_R,\\
             u=0,\ \ &\text{on}\  \ \partial B_R.
\end{cases}
	\end{align*}
  The $u$ is locally symmetric in the following sense:
  \begin{itemize}
    \item [(1)]$\displaystyle  B_R=\bigcup_{k=1}^m A_k \cup \{x\mid \nabla u(x)=0\}$, \ \ \ \text{where}
    \begin{equation*}
      A_k=B_{R_k}(z_k)\backslash \overline{B_{r_k}(z_k)},\ \ \ z_k\in B_R,\ \ \ 0\le r_k<R_k;
    \end{equation*}
    \smallskip
    \item [(2)]$u(x)=U_k(|x-z_k|)$, $x\in A_k$, where $U_k\in C^1([r_k, R_k])$;
    \smallskip
    \item [(3)]$U'_k(r)<0$ for $r\in (r_k, R_k)$;
        \smallskip
    \item [(4)]$u(x)\ge U_k(r_k),\ \forall\,x\in B_{r_k}(z_k)$, $k=1,\cdots, m$;
        \smallskip
    \item [(5)] the sets $A_k$ are pairwise disjoint and $m\in \mathbb{N}\cup \{+\infty\}$.
  \end{itemize}
\end{proposition}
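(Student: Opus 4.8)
My plan is to follow the continuous Steiner symmetrization (CSS) strategy of \cite{Bro1}. I would first introduce the energy
\begin{equation*}
E(v):=\int_{B_R}\Big(\tfrac12|\nabla v|^2-F(|x|,v)\Big)\,dx,\qquad F(r,t):=\int_0^t f(r,s)\,ds,\quad v\in H_0^1(B_R),
\end{equation*}
and record that, since $u$ is a weak solution vanishing on $\partial B_R$, it is a critical point of $E$. For a hyperplane $H$ through the origin, let $\{T_H^t\}_{t\ge0}$ denote the associated CSS flow, with $T_H^0=\mathrm{id}$ and $T_H^\infty$ ordinary Steiner symmetrization in the direction $H^\perp$; each $T_H^t$ preserves the distribution function of a nonnegative function, and since $\{u>0\}=B_R$ is already Steiner-symmetric with respect to every $H\ni0$, one gets $T_H^tu\in H_0^1(B_R)$ for all $t$. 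The first thing to record is that $E$ does not increase along any such flow: the P\'olya--Szeg\H{o} inequality for CSS (Brock) says $t\mapsto\int_{B_R}|\nabla(T_H^tv)|^2\,dx$ is non-increasing, while the layer-cake identity
\begin{equation*}
\int_{B_R}F(|x|,v)\,dx=\int_0^\infty\Big(\int_{\{v>\lambda\}}f(|x|,\lambda)\,dx\Big)\,d\lambda
\end{equation*}
turns the potential term into a family of one-dimensional ``bathtub'' inequalities: on each line orthogonal to $H$ the map $|x|$ is an even function of the signed coordinate increasing away from $0$, so $f(|x|,\lambda)$ is symmetric-decreasing there because $f$ is non-increasing in $r$; hence $t\mapsto\int_{B_R}F(|x|,T_H^tv)\,dx$ is non-decreasing. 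Together these give $E(T_H^tu)\le E(u)$ for every $t\ge0$ and every $H\ni0$.

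The crux, which I would establish next, is that precisely because $u$ solves the equation the energy drop is only $o(t)$. Writing $\psi_t:=u-T_H^tu\in H_0^1(B_R)$ and expanding $E$ at $u$ along $\psi_t$, the Dirichlet part contributes $\int_{B_R}\nabla u\cdot\nabla\psi_t\,dx$ up to the manifestly nonpositive term $-\tfrac12\int_{B_R}|\nabla\psi_t|^2\,dx$, and the potential part contributes $\int_{B_R}f(|x|,u)\,\psi_t\,dx$ up to an error that is $o(t)$ as soon as one knows the CSS path starts Lipschitzly, $\|T_H^tu-u\|_{L^2(B_R)}=O(t)$ (using the uniform continuity of $f$ on the compact range of $u$). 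Since $u$ is a weak solution, $\int_{B_R}\nabla u\cdot\nabla\psi_t\,dx=\int_{B_R}f(|x|,u)\,\psi_t\,dx$, and I would conclude $0\le E(u)-E(T_H^tu)\le o(t)$ as $t\to0^+$. Comparing this with the two monotonicities forces the deficits in both the P\'olya--Szeg\H{o} inequality and the bathtub inequalities to vanish to first order; plugging this into the sharp, stability-type form of the P\'olya--Szeg\H{o} inequality under continuous Steiner symmetrization together with the characterization of its equality case (Brock's CSS analogue of the Brothers--Ziemer theorem) yields that $u$ is locally symmetric in the direction $H^\perp$: modulo $\{\nabla u=0\}$, every superlevel set $\{u>\lambda\}$ is a union of sets each symmetric under reflection across some hyperplane parallel to $H$.

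Finally I would amalgamate this over all directions. On a connected component $\Omega$ of the open set $\{\nabla u\neq0\}$, having for \emph{every} direction $H\ni0$ a reflection symmetry across some hyperplane parallel to $H$ forces $u|_\Omega$ to depend only on the distance to a single point $z\in B_R$; since $u\in C^1(\overline{B_R})$ and $|\nabla u|>0$ on $\Omega$, the level sets of $u$ in $\Omega$ are then concentric spheres about $z$, so $\Omega$ is a spherical annulus $A=B_{R'}(z)\setminus\overline{B_{r'}(z)}$ with $u(x)=U(|x-z|)$ there, $U\in C^1([r',R'])$ and $U'<0$ on $(r',R')$ (the strict sign because $\nabla u\neq0$ and the symmetrized superlevel sets are totally ordered by inclusion), while $u\ge U(r')$ on the core ball $B_{r'}(z)$ because continuous Steiner symmetrization concentrates mass toward the centre, so the local symmetry is of ``bump'' type. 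Enumerating the (at most countably many) components of $\{\nabla u\neq0\}$ as $A_1,A_2,\dots$ then produces pairwise disjoint annuli with $B_R=\bigcup_k A_k\cup\{\nabla u=0\}$ and $m\in\mathbb N\cup\{\infty\}$, which are exactly (1)--(5).

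The main obstacle I anticipate lies in the second step, for two intertwined reasons. First, proving that the CSS path starts Lipschitzly in $L^2$, i.e. $\|T_H^tu-u\|_{L^2(B_R)}=O(t)$: CSS is a measure-theoretic deformation whose infinitesimal motion is carried by level sets, so with only $u\in C^1$ at hand one must argue carefully near the critical set $\{\nabla u=0\}$ and across the levels at which the topology of the superlevel sets changes. Second, upgrading the first-order vanishing of the deficits into the pointwise local-symmetry statement requires the sharp equality analysis of the P\'olya--Szeg\H{o} inequality under continuous Steiner symmetrization, which is the technical heart of \cite{Bro1}; by comparison the geometric assembly in the third step, though it needs care, follows the standard route by which radial symmetry is deduced from reflection symmetry in every direction.
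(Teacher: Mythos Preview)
The paper does not give its own proof of this proposition: it is quoted as Theorem~7.2 of \cite{Bro1} and accompanied only by the remark that the result is obtained via continuous Steiner symmetrization (with a further pointer to \cite{Bro0,Bro2}). There is thus no argument in the paper to compare your proposal against.

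What you have written is a faithful outline of Brock's original strategy in \cite{Bro1}: monotonicity of the Dirichlet energy and of the potential term along the continuous Steiner symmetrization flow, the criticality of the weak solution forcing the energy deficit to be $o(t)$ as $t\to0^+$, and then the equality/stability analysis of the P\'olya--Szeg\H{o} inequality under CSS that yields local symmetry in each direction, followed by assembly over all directions into the annuli $A_k$. You have also correctly identified where the real work lies---the $L^2$-Lipschitz start of the CSS path and, above all, the rigidity analysis of the CSS P\'olya--Szeg\H{o} inequality, which is indeed the technical core of \cite{Bro1}. One small slip: in your expansion of $E(u)-E(T_H^tu)$ the first-order contribution of the potential term carries the opposite sign to the one you wrote (it is $-\int f(|x|,u)\,\psi_t$, not $+\int f(|x|,u)\,\psi_t$); this is harmless because criticality cancels the two first-order terms regardless of the common sign, and your conclusion $E(u)-E(T_H^tu)=o(t)$ still follows.
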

\begin{remark}
  Note that if $u$ is locally symmetric, then $u$ is radially symmetric and radially decreasing in annuli $A_k\, (k=1, \cdots, m)$, and flat elsewhere in $B_R$. Moreover, since $u\in C^1(\overline{B_R})$, we have that
  \begin{equation*}
    U_k'(r_k)=0,
  \end{equation*}
  and if $R_k<R$, then also
 \begin{equation*}
    U_k'(R_k)=0,
 \end{equation*}
 $(k\in \{1,\cdots, m\})$.
\end{remark}

\noindent{\bf Acknowledgments.}
{The authors are grateful to Professor François Hamel for his helpful discussion on this issue.
}

	\phantom{s}
	\thispagestyle{empty}


\begin{thebibliography}{99}

\bibitem{Bro0}
 F. Brock, Continuous Steiner-symmetrization, \textit{Math. Nachr.}, 172 (1995), 25--48.

\bibitem{Bro1}
 F. Brock, Continuous rearrangement and symmetry of solutions of elliptic problems, \textit{Proc. Indian Acad. Sci. Math. Sci.}, 110 (2000), no. 2, 157--204.

\bibitem{Bro2}
F. Brock and P. Takáč, Symmetry and stability of non-negative solutions to degenerate elliptic equations in a ball, \textit{Proc. Amer. Math. Soc.}, 150 (2022), no. 4, 1559--1575.

\bibitem{Chae}
D. Chae and P. Constantin, Remarks on a Liouville-type theorem for Beltrami flows, \textit{Int. Math. Res. Not. IMRN}, 2015, no. 20, 10012--10016.

\bibitem{Cons}
P. Constantin, T. D. Drivas and D. Ginsberg, Flexibility and rigidity in steady fluid motion, \textit{Comm. Math. Phys.}, 385 (2021), no. 1, 521--563.

\bibitem{Gom}
J. Gómez-Serrano, J. Park, J. Shi and Y. Yao, Symmetry in stationary and uniformly rotating solutions of active scalar equations, \textit{Duke Math. J.}, 170 (2021), no. 13, 2957--3038.



    \bibitem{HN3}
F. Hamel and N. Nadirashvili, Shear flows of an ideal fluid and elliptic equations in unbounded domains. Comm. Pure Appl. Math. 70 (2017), no. 3, 590–608.

    \bibitem{HN2}
F. Hamel and N. Nadirashvili, Parallel and circular flows for the two-dimensional Euler equations. Semin. Laurent Schwartz EDP Appl. 2017–2018, exp. V, 1--13.

    \bibitem{HN1}
    F. Hamel and N. Nadirashvili, A Liouville theorem for the Euler equations in the plane, \textit{Arch. Ration. Mech. Anal.}, 233 (2019), no. 2, 599--642.



    \bibitem{HN}
    F. Hamel and N. Nadirashvili, Circular flows for the Euler equations in two-dimensional annular domains, and related free boundary problems, \textit{J. Eur. Math. Soc. (JEMS)}, 25 (2023), no. 1, 323--368.

\bibitem{Nad}
N. Nadirashvili, Liouville theorem for Beltrami flow, \textit{Geom. Funct. Anal.} 24 (2014), no. 3, 916--921.

\bibitem{Ruiz}
D. Ruiz, Symmetry results for compactly supported steady solutions of the 2D Euler equations, \textit{Arch. Ration. Mech. Anal.}, 247 (2023), no. 3, Paper No. 40, 25 pp.

	

	\end{thebibliography}
\end{document}